\newcommand\abs[1]{\left\lvert#1\right\rvert}
\newcommand{\E}{\mathbb{E}}
\newcommand{\X}{\mathcal{X}}
\newcommand{\gti}{\rightarrow\infty}
\newcommand{\gtz}{\rightarrow0}
\newtheorem*{theorem*}{Theorem}
\theoremstyle{definition}
\theoremstyle{remark}
\title{Rubik’s Cube Scrambling Requires at Least 26 Random Moves}
\renewcommand*{\thefootnote}{\fnsymbol{footnote}}
\author{%
  Yanlin Qu$^*$ \quad Tomas Rokicki \quad Hillary Yang\\
  $^*$Stanford University\\
  \texttt{quyanlin@alumni.stanford.edu}\footnotemark
}
\begin{document}

\maketitle

\begin{abstract}
Scrambling the standard 3x3x3 Rubik’s Cube corresponds to a random walk on a group containing approximately 43 quintillion elements. Viewing the random walk as a Markov chain, its mixing time determines the number of random moves required to sufficiently scramble a solved cube. With the aid of a supercomputer, we show that the mixing time is at least 26, providing the first non-trivial bound.
\end{abstract}

\section{Introduction}
The Rubik’s Cube, invented by Ern\H{o} Rubik in 1974, is a classic combinatorial puzzle with approximately 43 quintillion possible configurations. A natural question to ask is:
\begin{quote}
How many moves are required to solve any configuration of the cube?
\end{quote}
This question has been definitely answered: any configuration can be solved in 20 moves or fewer; see \cite{rokicki2014diameter}.
However, the opposite question remains open:
\begin{quote}
How many moves are required to sufficiently scramble a solved cube?
\end{quote}
If the answer to the first question is known as God’s number, then the answer to the second question could be called the {\it Ragnar\"{o}k countdown.}
When the cube is thoroughly scrambled, each configuration is equally likely to occur (i.e., uniform distribution). Figuring out the minimum number of random moves required to approximately reach the uniform distribution is of both theoretical and practical interest, especially in the speedcubing community. A similar problem in the context of card shuffling has been analytically addressed: seven riffle shuffles are enough to approximately mix a standard deck of playing cards; see \cite{bayer1992trailing}. Given the Rubik’s Cube’s far more complex structure compared to a deck of cards, computational methods, rather than analytical ones, are required to study the scrambling process.

In this paper, with the aid of a supercomputer, we show that at least 26 random moves are required to sufficiently scramble a solved cube. In other words, 25 random moves are insufficient to ensure that each configuration occurs with approximately equal probability. Formally, repeatedly applying independent and identically distributed (iid) random moves to the cube generates a Markov chain that converges to the uniform distribution. By convention, the mixing time is defined as the time when the total variation (TV) distance towards the limiting distribution drops below one-quarter. Our conclusion is that the mixing time is no less than 26, which is the first non-trivial bound.

The rest of the paper is organized as follows: In Section \ref{preliminaries}, we introduce the notation for describing the Rubik’s Cube, review the concept of Markov chain mixing time, and provide an overview of the supercomputer used in this study. In Section \ref{main}, we prove the lower bound. In Section \ref{tri}, we present a supercomputer-generated dataset that enables a more in-depth convergence analysis.

\footnotetext{Current affiliation: Columbia Business School. Working email: \texttt{qu.yanlin@columbia.edu}.}

\renewcommand*{\thefootnote}{\arabic{footnote}}
\setcounter{footnote}{0}
\section{Preliminaries}
\label{preliminaries}

\begin{figure*}[ht]
\begin{center}
\centerline{\includegraphics[width=0.3\columnwidth]{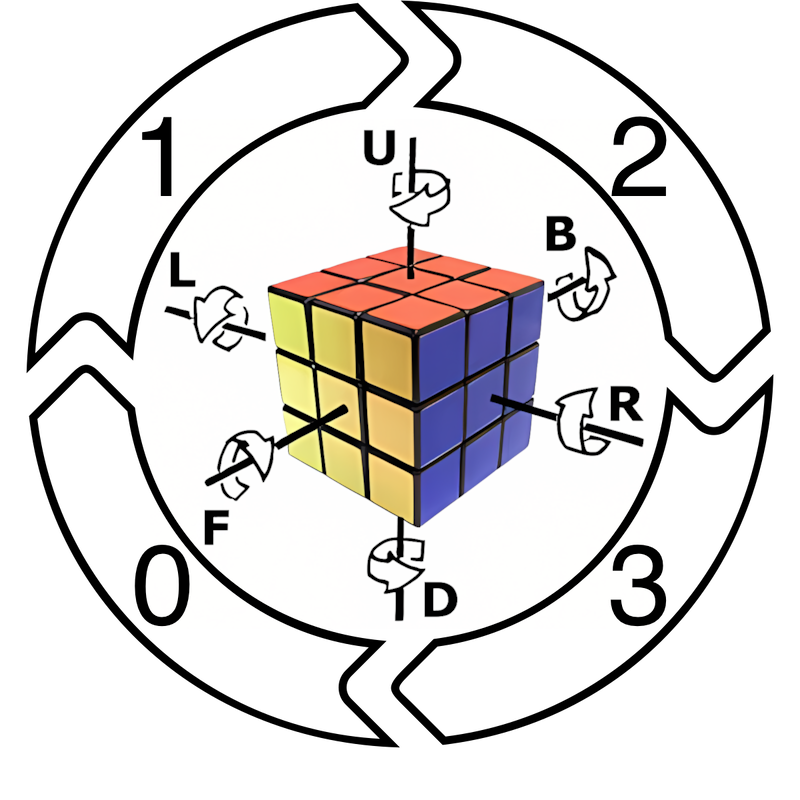}}
\caption{U: up, D: down, F: front, B: back, L: left, R: right; 1: 90$^\circ$, 2: 180$^\circ$, 3: 270$^\circ$} (clockwise); e.g., R3: rotating the right face 270 degrees clockwise.\footnotemark
\label{cube}
\end{center}
\end{figure*}
\footnotetext{Image source: \url{https://rubiks.fandom.com/wiki/Fridrich_Method}, modified by the first author.}
\subsection{Rubik's Cube notation}
As shown in Figure \ref{cube}, the six faces of the Rubik’s Cube are labeled with the letters U (up), D (down), F (front), B (back), L (left), and R (right). Each face can be rotated 90, 180, or 270 degrees clockwise, labeled as 1, 2, and 3, respectively. (Focusing on clockwise rotations makes the notation more systematic.) The six faces and three rotation options result in a total of 18 distinct moves, namely:
\[
\mathrm{U1, U2, U3, D1, D2, D3, F1, F2, F3, B1, B2, B3, L1, L2, L3, R1, R2, R3}.
\]
For example, R3 means rotating the right face 270 degrees clockwise.
\subsection{Markov chain mixing time}
Let $\X$ denote the space of all possible configurations. Let $f$ be a move sampled uniformly at random from the 18 distinct moves described above, which randomly changes the Rubik’s Cube from one configuration to another. Let $f_1,f_2,...$ be iid copies of $f$. The scrambling process is described by the recursion
\[
X_{n+1}=f_{n+1}(X_n),\;\;n=0,1,2,...,
\]
which defines a Markov chain $X=(X_n:n\geq0)$ on $\X$. This Markov chain is irreducible because it can reach any state (configuration) in a finite number of steps (moves). It is also aperiodic, as it can return to a state in either two steps (e.g., R1R3) or three steps (e.g., R1R2R1). An irreducible and aperiodic Markov chain on a finite state space converges to a unique stationary distribution, regardless of its initial state; see, e.g., \cite{levin2017markov}. The unique stationary distribution, denoted by $X_\infty$, must be the uniform distribution due to the detailed balance equations. As $n\gti$, $X_n$ converges to $X_\infty$ in TV distance
\[
\mathrm{TV}(X_n,X_\infty)=\sup_{h:|h|\leq1}\abs{\E h(X_n)-\E h(X_\infty)}=(1/2)\sum_{x\in\X}\abs{P(X_n=x)-P(X_\infty=x)}\gtz,
\]
where all probabilities ($P$) and expectations ($\E$) are conditional on starting from the solved state. The mixing time is defined as the smallest $n$ such that the above TV distance is small
\[
t_{\mathrm{mix}}(\epsilon)=\min\{n:\mathrm{TV}(X_n,X_\infty)\leq \epsilon\}.
\]
Although the convention is to set $t_{\mathrm{mix}}=t_{\mathrm{mix}}(1/4)$, to better quantify the convergence, we bound each $t_{\mathrm{mix}}(\epsilon)$ from below by bounding each $\mathrm{TV}(X_n,X_\infty)$ from below.

\subsection{Big Red 200}

Indiana University’s supercomputer Big Red 200 is an HPE Cray EX supercomputer featuring 640 compute nodes, each equipped with 256 GB of memory and two 64-core, 2.25 GHz, 225-watt AMD EPYC 7742 processors.  Big Red 200 also includes 64 GPU-accelerated nodes, each with 256 GB of memory, a single 64-core, 2.0 GHz, 225-watt AMD EPYC 7713 processor, and four NVIDIA A100 GPUs. Big Red 200 has a theoretical peak performance (Rpeak) of nearly 7 petaFLOPS.
\section{Main result}
\begin{theorem*}
    Rubik’s Cube scrambling requires at least 26 random moves, i.e., $t_{\mathrm{mix}}\geq26.$
\end{theorem*}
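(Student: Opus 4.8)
The plan is to replace the chain on the full group by a much smaller, computationally tractable image that is provably still far from equilibrium after $25$ steps, exploiting the fact that applying a deterministic map to a pair of distributions cannot increase their total variation distance. The natural target is the \emph{edge group} $E$: the Rubik's Cube group $G$ (order $\approx 4.3\times10^{19}$) sits inside $C\times E$, where $C$ is the corner group and $E$ is the group of permutations-and-orientations of the twelve edge cubies, of order $2^{11}\cdot 12!\approx 9.8\times10^{11}$, and the ``forget the corners'' projection $\pi\colon G\to E$ is a surjective group homomorphism. Hence $\pi(X_n)$ is itself the random walk on $E$ driven by the images of the $18$ generators, its stationary law is the uniform distribution $U_E$ (the push-forward of uniform under a surjective homomorphism of finite groups), and $\mathrm{TV}(X_n,X_\infty)\ge\mathrm{TV}(\pi(X_n),U_E)$. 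It therefore suffices to prove $\mathrm{TV}(\pi(X_{25}),U_E)>1/4$: since $n\mapsto\mathrm{TV}(X_n,X_\infty)$ is non-increasing, this yields $t_{\mathrm{mix}}=t_{\mathrm{mix}}(1/4)\ge 26$.

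The core computation is to produce the law of $\pi(X_n)$ \emph{exactly} for $n=0,1,\dots,25$. Starting from the point mass at the identity of $E$, this amounts to $25$ multiplications by the sparse stochastic matrix of the walk on $E$ (exactly $18$ nonzero entries per row), with the action of each generator on a configuration evaluated via an explicit ranking/unranking of $E$ (a Lehmer code for the $S_{12}$ part plus $11$ orientation bits). Two difficulties appear at the scale of $\approx10^{12}$ states. First, memory: the distribution vector must be sharded across the supercomputer's nodes. Second, exactness: because the eventual margin $\mathrm{TV}-1/4$ may be small, I would propagate the integer weights $a_n(y)\defeq 18^{\,n}\,P(\pi(X_n)=y)$ --- the number of length-$n$ move words carrying the edges to configuration $y$ --- which are nonnegative integers bounded by $18^{25}\approx2.4\times10^{31}<2^{128}$ and so fit in $128$-bit integers, making each matrix step an exact integer accumulation. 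One may further collapse the state space by the order-$48$ symmetry group of the cube, cutting the count by nearly a factor of $48$; this is harmless since total variation distance is invariant under a global relabelling of states.

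It then remains to read off, with $M\defeq18^{25}$ and $N\defeq\abs{E}$,
\[
\mathrm{TV}\prs{\pi(X_{25}),U_E}=\frac{1}{2MN}\sum_{y\in E}\abs{N\,a_{25}(y)-M},
\]
and to verify that this exceeds $1/4$, i.e.\ that $\sum_{y\in E}\abs{N\,a_{25}(y)-M}>MN/2$ --- a comparison of exact integers. Recording $\mathrm{TV}(\pi(X_n),U_E)$ for the smaller values of $n$ as well produces lower bounds on $t_{\mathrm{mix}}(\epsilon)$ over a range of $\epsilon$, as announced in Section~\ref{preliminaries}.

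Beyond the engineering of a correct exact computation over $\sim10^{12}$ states, the step I expect to be the genuine obstacle is the a priori question of whether a tractable quotient is \emph{still unmixed at time} $25$ at all: smaller groups mix faster, and $\abs{E}$ is roughly seven orders of magnitude below $\abs{G}$. The wager --- which only the computation can settle --- is that the edge group, being the largest proper quotient of $G$ and the one in which the slowly-mixing symmetric-group permutation dynamics dominate, retains essentially all of the cube's slow mixing; should $\mathrm{TV}(\pi(X_{25}),U_E)$ instead drop below $1/4$, one would be forced toward a larger and very likely infeasible quotient, or to a finer distinguishing statistic in the spirit of Wilson's method.
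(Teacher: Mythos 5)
Your reduction machinery is sound: projecting to the edge group via the surjective homomorphism $\pi$ pushes the stationary law to uniform on $E$, the data-processing inequality gives $\mathrm{TV}(\pi(X_n),U_E)\leq\mathrm{TV}(X_n,X_\infty)$, and monotonicity of $n\mapsto\mathrm{TV}(X_n,X_\infty)$ lets a single bound at $n=25$ yield $t_{\mathrm{mix}}\geq26$; the exact-integer formulation of the TV on $E$ and the symmetry reduction are also fine. But the proposal never establishes the one fact that carries the theorem, namely $\mathrm{TV}(\pi(X_{25}),U_E)>1/4$. You flag this yourself as a ``wager,'' and that is precisely the gap: as written, this is a conditional plan whose conclusion depends on an unverified numerical claim about a quotient walk, and the claim is far from safe. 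Since $E$ is a quotient, its walk mixes at least as fast as the full cube, and the paper's own discussion points the wrong way for you: the corner quotient (the embedded 2x2x2) has mixing time only 19 despite millions of states. The edge group is larger ($\approx10^{12}$), but its typical distance from the solved edge configuration under the 18 generators is far below 17--18, so the slow ``transport of mass to the outer shell'' that makes the full cube (and the functional $d_o$) lag behind is substantially weakened; it is entirely plausible that the edge walk has TV below $1/4$ well before step 25, in which case your program terminates with no bound at all.

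The paper avoids this risk by choosing a statistic that provably retains slowness and then certifying the inequality: it takes $Y=d_o(X)$, the optimal solution length computed by God's algorithm, whose state space has only 21 values, and estimates $\mathrm{TV}(Y_{25},Y_\infty)=0.286\pm0.0007>0.25$ from $10^6$ samples of each with a bootstrap error bar --- the same data-processing step as yours, but with the decisive numerical verification actually carried out. If you want to pursue your route, you must either supply the exact computation on $E$ (a very heavy distributed sparse-matrix iteration over $\sim10^{12}$ states, with two vectors of 128-bit integers sharded across nodes and all-to-all communication at each of the 25 steps) and report that the resulting integer comparison indeed exceeds $MN/2$, or hedge with a statistic for which there is independent evidence of slow mixing; without one of these, the argument does not yet prove the stated theorem.
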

\begin{proof}
    Let $m=25$. It suffices to show that $\mathrm{TV}(X_m,X_\infty)>0.25.$ However, due to the astronomical size of $\X$, estimating $\mathrm{TV}(X_m,X_\infty)$ via Monte Carlo is infeasible, so we need to find a computable lower bound for it. In what follows, we refer to the solved state as the origin, denoted by $o$. For $x\in\X$, let $d_o(x)\in\{0,...,20\}$ be the distance between $x$ and $o$, i.e., the minimum number of moves required to solve configuration $x$ is $d_o(x)$. With $Y=d_o(X)$, we have
    \begin{align*}
        \mathrm{TV}(Y_m,Y_\infty)=&\sup_{l:|l|\leq1}\abs{\E l(Y_m)-\E l(Y_\infty)}\\
        =&\sup_{l:|l|\leq1}\abs{\E l(d_o(X_m))-\E l(d_o(X_\infty))}\\
        \leq&\sup_{h:|h|\leq1}\abs{\E h(X_m)-\E h(X_\infty)}\\
        =&\mathrm{TV}(X_m,X_\infty)
    \end{align*}
    where the inequality is because $|l\circ d_o|\leq1$. From $X$ to $Y$, the size of the state space is drastically reduced from 43 quintillion to merely 21, making the estimation of $\mathrm{TV}(Y_m,Y_\infty)$ feasible. Since computing $Y=d_o(X)$ requires finding the shortest solution for $X$, known as God’s algorithm\footnotemark, sampling $Y$ is computationally intensive. Fortunately, Big Red 200 is powerful enough to generate millions of samples per day. 
    With 1 million samples of $Y_m$, 1 million samples of $Y_\infty$, and 1000 bootstrap resamples, we have
    \[
    \mathrm{TV}(X_m,X_\infty)\geq \mathrm{TV}(Y_m,Y_\infty)=0.286329\pm0.000697>0.25.
    \]
    \footnotetext{\url{https://github.com/rokicki/cube20src}}
\end{proof}
\label{main}
\section{``Trilateration''}
\label{tri}
To fully leverage the above distance-based approach, we create the {\it Trilateration Dataset} as follows. We generate 1 million samples for each of $X_1,...,X_{52}$, as well as for $X_\infty$, resulting in a total of 53 million random cubes. For each of them, we compute the distance to the origin $d_o$ (Figure \ref{pattern}, Left), the distance to the superflip $d_s$ (Figure \ref{pattern}, Middle), and the distance to the checkerboard $d_c$ (Figure \ref{pattern}, Right). 
With this dataset at hand, we can analyze the convergence of $d_o(X)$, $d_s(X)$, and $d_c(X)$ to gain deeper insights into the convergence of $X$.\\
\begin{figure}[ht]
\label{pattern}
    \centering
    \begin{minipage}{0.33\textwidth}
        \centering
        \includegraphics[width=0.6\linewidth]{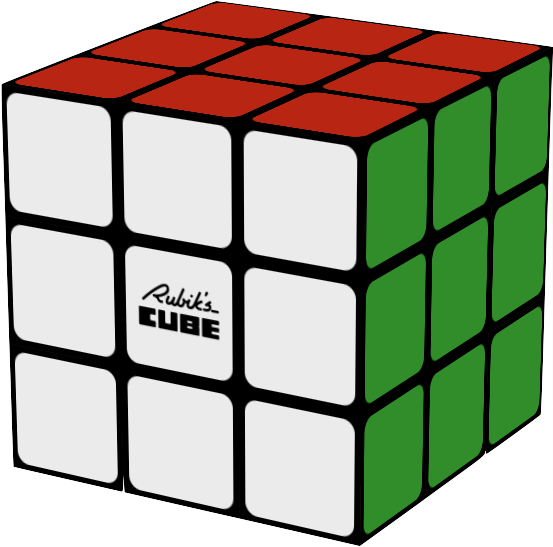}
    \end{minipage}%
    \begin{minipage}{0.33\textwidth}
        \centering
        \includegraphics[width=0.6\linewidth]{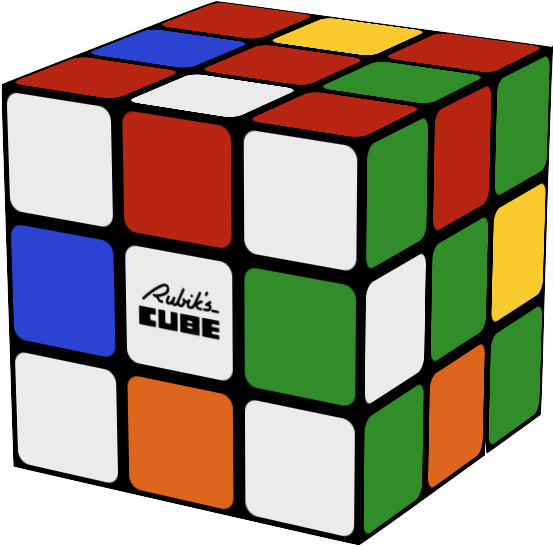}
    \end{minipage}%
    \begin{minipage}{0.33\textwidth}
        \centering
        \includegraphics[width=0.6\linewidth]{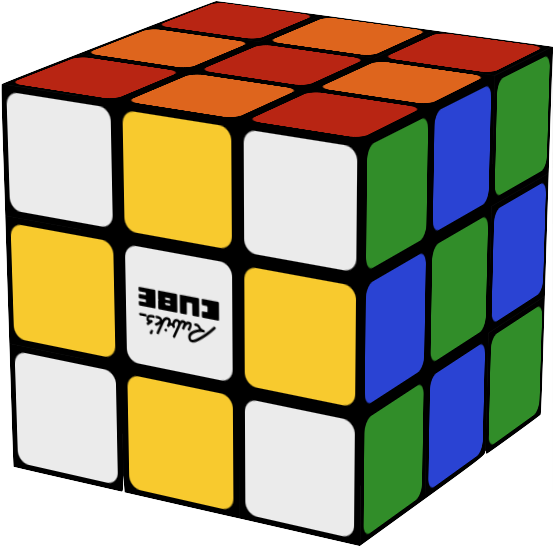}
    \end{minipage}
    \caption{Left: the origin $o$. Middle: the superflip $s$. Right: the checkerboard $c$.}
\end{figure}

To visualize the convergence, we plot not only the TV decay curve but also a sequence of histograms to illustrate how the marginal distribution approaches its limit. 
These sample-based plots can effectively be viewed as their population-based counterparts, because 1 million samples are more than enough to eliminate any visually detectable error when estimating a distribution with support size 21, as illustrated by the bootstrap in the above proof.

It turns out that combining $d_o(X)$, $d_s(X)$, and $d_c(X)$ does not yield an improved lower bound, so we analyze the convergence of each distance separately in the following. Improving the lower bound via combining $d_o(X)$ with some other functionals of $X$ is left for future research.

\subsection{Distance to the origin}

\begin{figure*}[ht]
\begin{center}
\centerline{\includegraphics[width=1.1\linewidth]{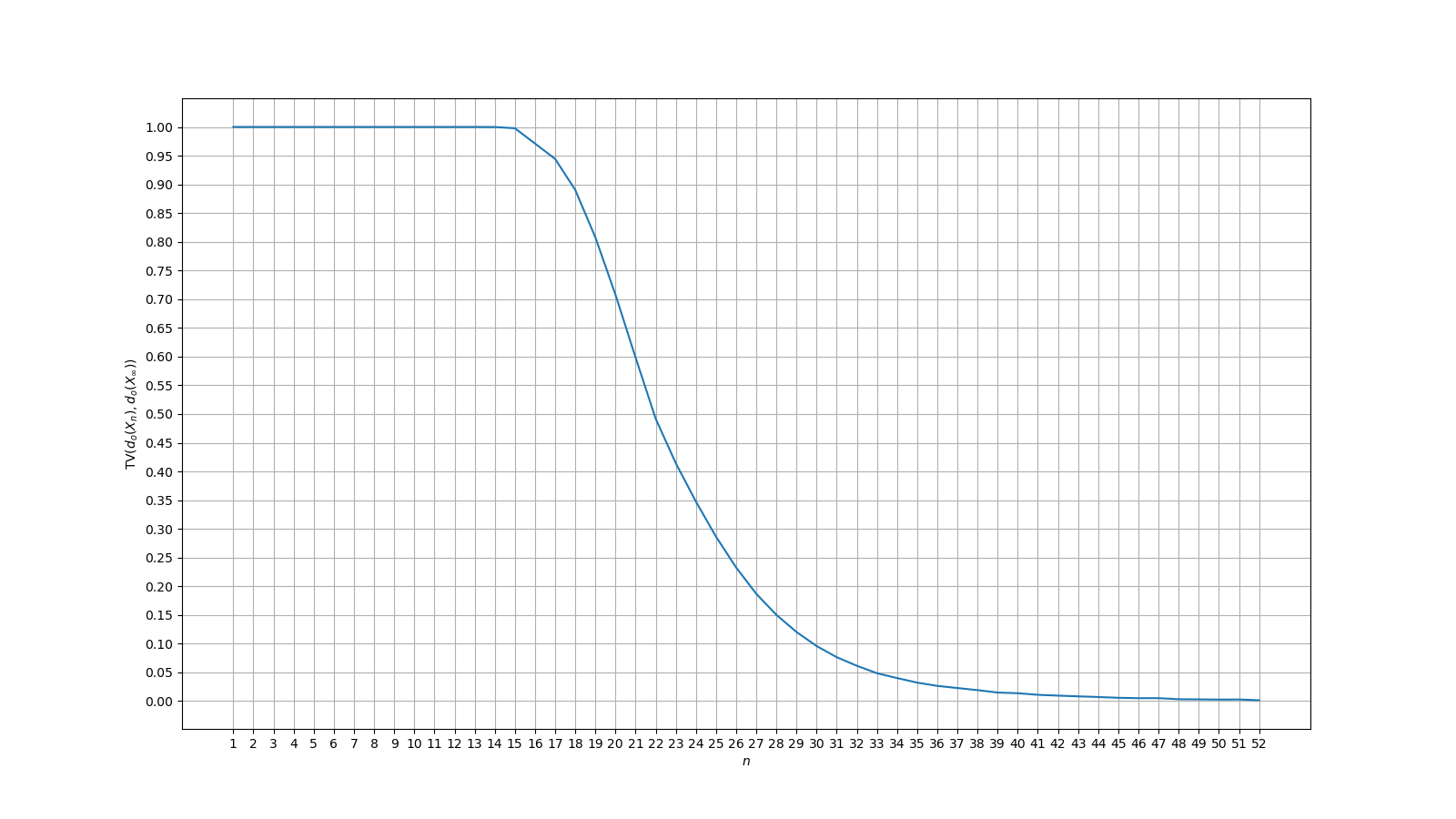}}
\caption{The decay curve of $\mathrm{TV}(d_o(X_n),d_o(X_\infty))$.}
\label{decay_o}
\end{center}
\end{figure*}

The TV decay curve in Figure \ref{decay_o} provides a clear overview of how $d_o(X_n)$ converges to $d_o(X_\infty)$, offering a lower bound for each $t_\mathrm{mix}(\epsilon)$. For example,
\[
t_\mathrm{mix}(0.5)\geq22,\;\;t_\mathrm{mix}(0.4)\geq24,\;\;t_\mathrm{mix}(0.3)\geq25,\;\;t_\mathrm{mix}(0.2)\geq27,\;\;t_\mathrm{mix}(0.1)\geq30,
\]
where the mixing time of $X$ is lower bounded by the mixing time of $d_o(X)$.
This functional of $X$ (distance to the origin) has a small state space (21 possible values) yet converges slowly (mixing time 26). These two properties are crucial for establishing a computable non-trivial lower bound on the convergence of $X$. However, human-made functionals (constructed without God’s algorithm) lack these properties. For example, the embedded 2x2x2 cube is a functional of the 3x3x3 cube, obtained by ignoring all edge pieces. Despite having millions of possible configurations, 19 is its mixing time\footnotemark. This comparison naturally raises the question of why the convergence of $d_o(X)$ is so slow. 

In Figure \ref{drift_o}, we plot the distributions of $d_o(X_{10})$, $d_o(X_{20})$, $d_o(X_{30})$, and $d_o(X_\infty)$. Starting from a point mass at 0, the convergence is all about moving towards 17 and 18, which together account for over 93\% of the mass in the limiting distribution. Roughly speaking, the mixing time of $d_o(X)$ is the time when 70\% of the mass is moved to 17 and 18. Instead of ``mixing'', ``moving'' (from 0 all the way to 17 and 18) better describes what happens here, which explains why the convergence of $d_o(X)$ is so slow.
Back to $\X$, since the scrambling process is highly symmetric, the mass should be moved from the origin to the ``outer shell'' in a uniform manner. If this is true, then $t_\mathrm{mix}\geq26$ may be tight.

\footnotetext{\url{https://theconversation.com/how-hard-is-it-to-scramble-rubiks-cube-129916}}

\begin{figure}[ht]
    \centering
    \begin{minipage}{0.25\textwidth}
        \centering
        \includegraphics[width=1\linewidth]{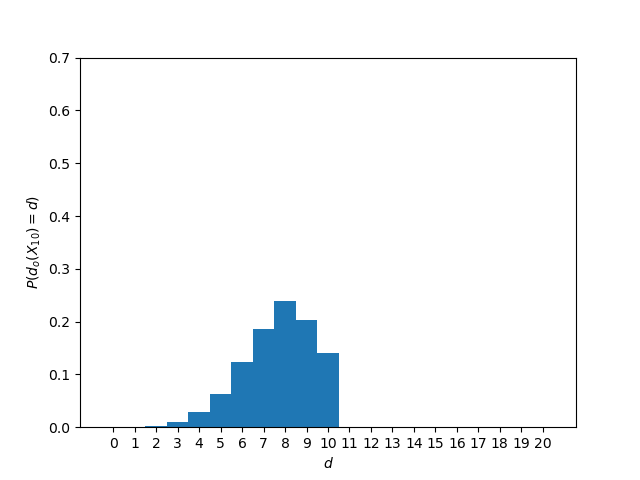}
    \end{minipage}%
    \begin{minipage}{0.25\textwidth}
        \centering
        \includegraphics[width=1\linewidth]{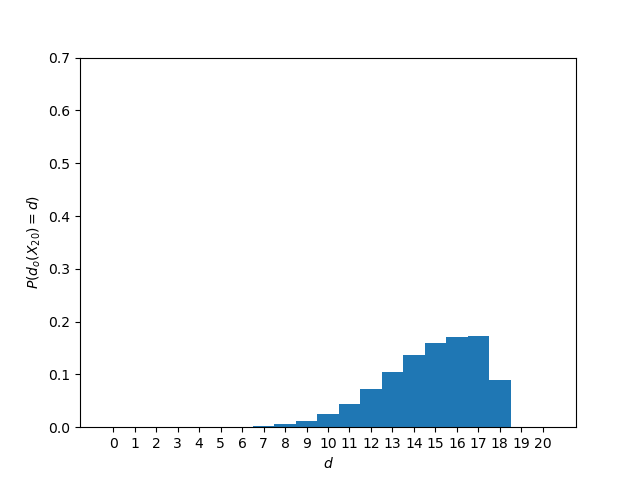}
    \end{minipage}%
    \begin{minipage}{0.25\textwidth}
        \centering
        \includegraphics[width=1\linewidth]{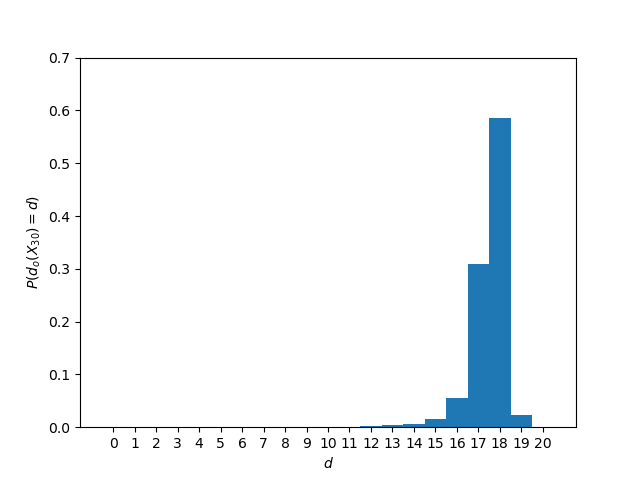}
    \end{minipage}%
    \begin{minipage}{0.25\textwidth}
        \centering
        \includegraphics[width=1\linewidth]{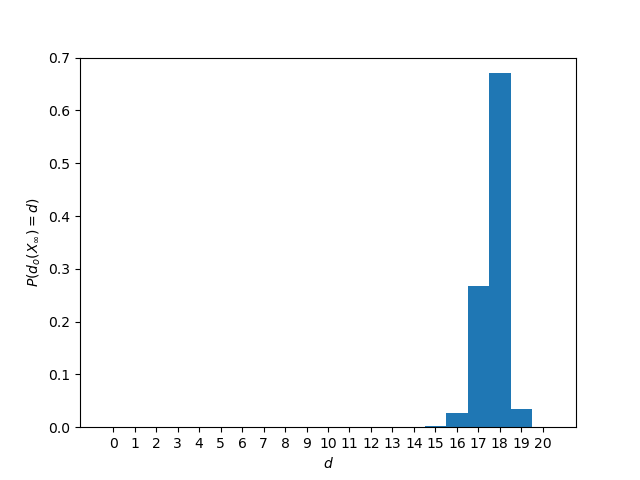}
    \end{minipage}
    \caption{The distributions of $d_o(X_{10})$, $d_o(X_{20})$, $d_o(X_{30})$, and $d_o(X_\infty)$.}
    \label{drift_o}
\end{figure}

\subsection{Distance to the superflip}

\begin{figure*}[ht]
\begin{center}
\centerline{\includegraphics[width=1.1\linewidth]{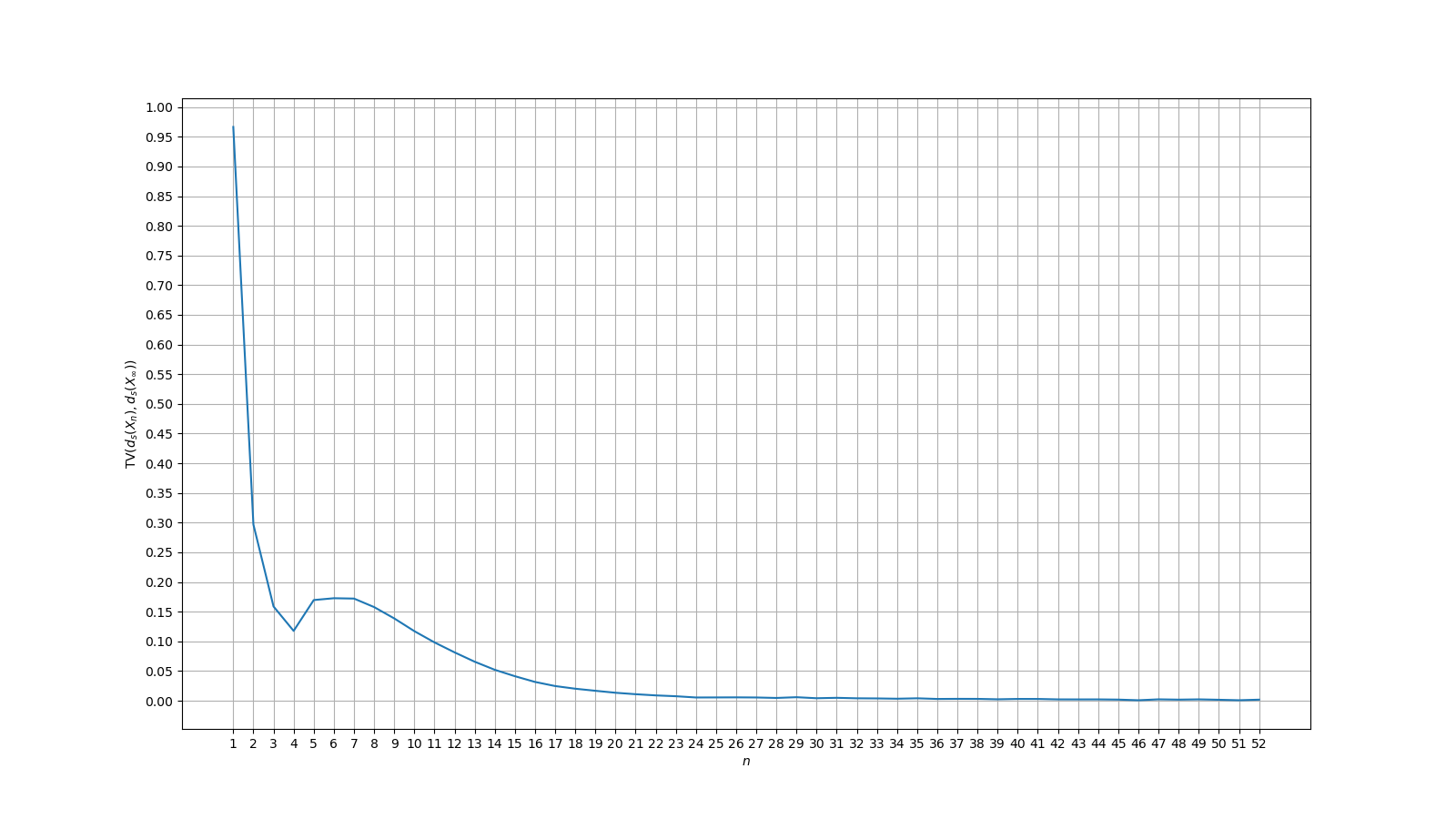}}
\caption{The decay curve of $\mathrm{TV}(d_s(X_n),d_s(X_\infty))$.}
\label{decay_s}
\end{center}
\end{figure*}

The superflip requires 20 moves to solve\footnotemark, so $d_s(X)$ starts from 20 instead of 0. As shown in Figure \ref{decay_s}, $d_s(X)$ converges almost instantly (mixing time 3). Recall that the mixing time of $d_o(X)$ is 26. This comparison naturally raises the question of why the convergence is $d_s(X)$ is so fast.

In Figure \ref{drift_s}, note that the distribution of $d_s(X_\infty)$ is the same as $d_o(X_\infty)$, concentrating on 17 and 18. This is because $X_\infty$ follows the uniform distribution on $\X$ and any configuration can be defined as the ``origin'' of $\X$, i.e., for each $d\in\{0,...,20\}$, the sets $\{x:d_s(x)=d\}$ and $\{x:d_o(x)=d\}$ have the same number of elements. Although $d_o(X)$ and $d_s(X)$ share the same limit, the former starts from 0, while the latter starts from 20. Starting from 20, 3 random moves are enough to move 70\% of the mass to 17 and 18, hence the mixing time. It is interesting to note that the TV decay curve in Figure \ref{decay_s} is not monotonic. Specifically, 
\[\mathrm{TV}(d_s(X_5),d_s(X_\infty))>\mathrm{TV}(d_s(X_4),d_s(X_\infty)).\] As shown in Figure \ref{drift_s}, some mass is moved from 18 ``back'' to 17. This is possible because $X_5$ can reach at most $18^5=1,889,568$ configurations, a negligibly small fraction of $\X$ that cannot represent $\X$ in terms of the distribution of the distance to the superflip.
Although $d_s(X)$ converges too fast to generate any non-trivial lower bound on the convergence of $X$, it serves as a non-trivial example to illustrate that the TV decay curve of a hidden Markov chain may not always be decreasing.

\footnotetext{\url{http://www.math.rwth-aachen.de/~Martin.Schoenert/Cube-Lovers/michael_reid__superflip_requires_20_face_turns.html}}

\begin{figure}[ht]
    \centering
    \begin{minipage}{0.25\textwidth}
        \centering
        \includegraphics[width=1\linewidth]{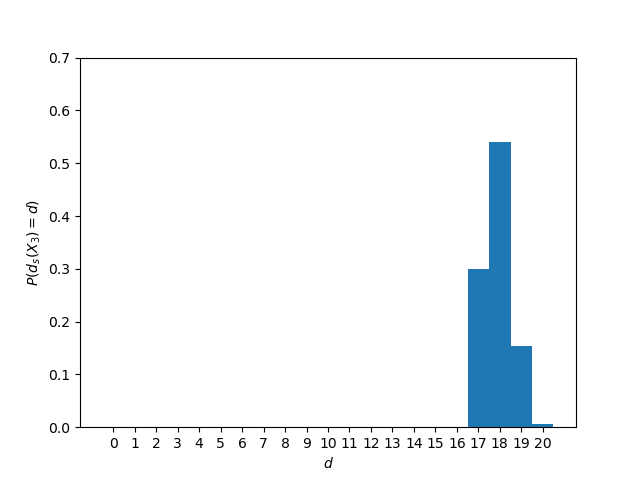}
    \end{minipage}%
    \begin{minipage}{0.25\textwidth}
        \centering
        \includegraphics[width=1\linewidth]{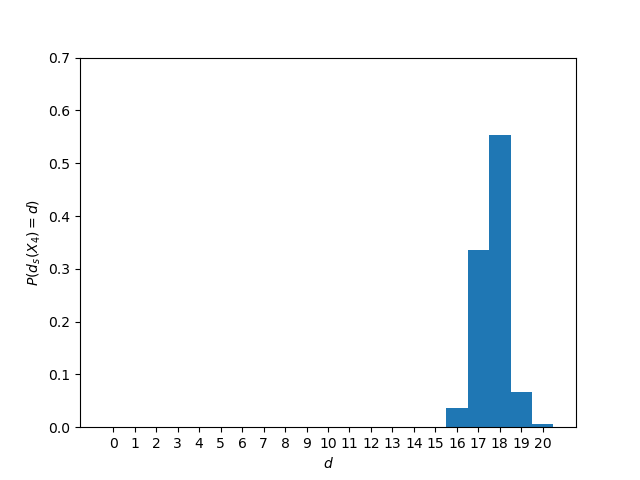}
    \end{minipage}%
    \begin{minipage}{0.25\textwidth}
        \centering
        \includegraphics[width=1\linewidth]{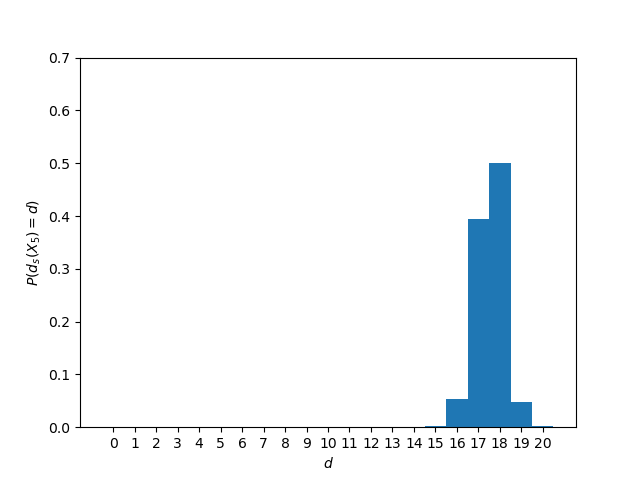}
    \end{minipage}%
    \begin{minipage}{0.25\textwidth}
        \centering
        \includegraphics[width=1\linewidth]{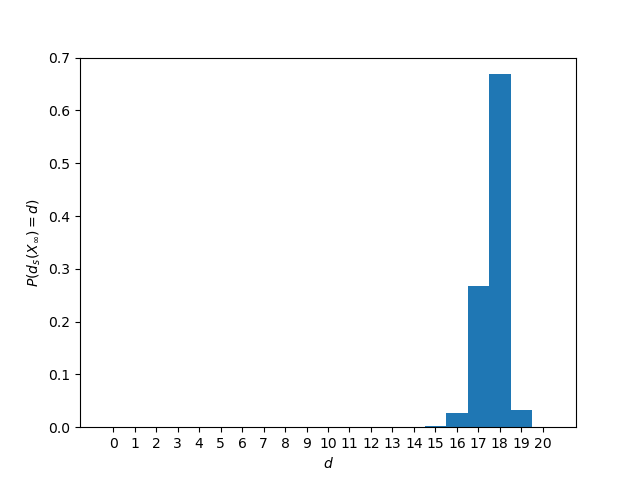}
    \end{minipage}
    \caption{The distributions of $d_s(X_{3})$, $d_s(X_{4})$, $d_s(X_{5})$, and $d_s(X_\infty)$.}
    \label{drift_s}
\end{figure}

\subsection{Distance to the checkerboard}

\begin{figure*}[ht]
\begin{center}
\centerline{\includegraphics[width=1.1\linewidth]{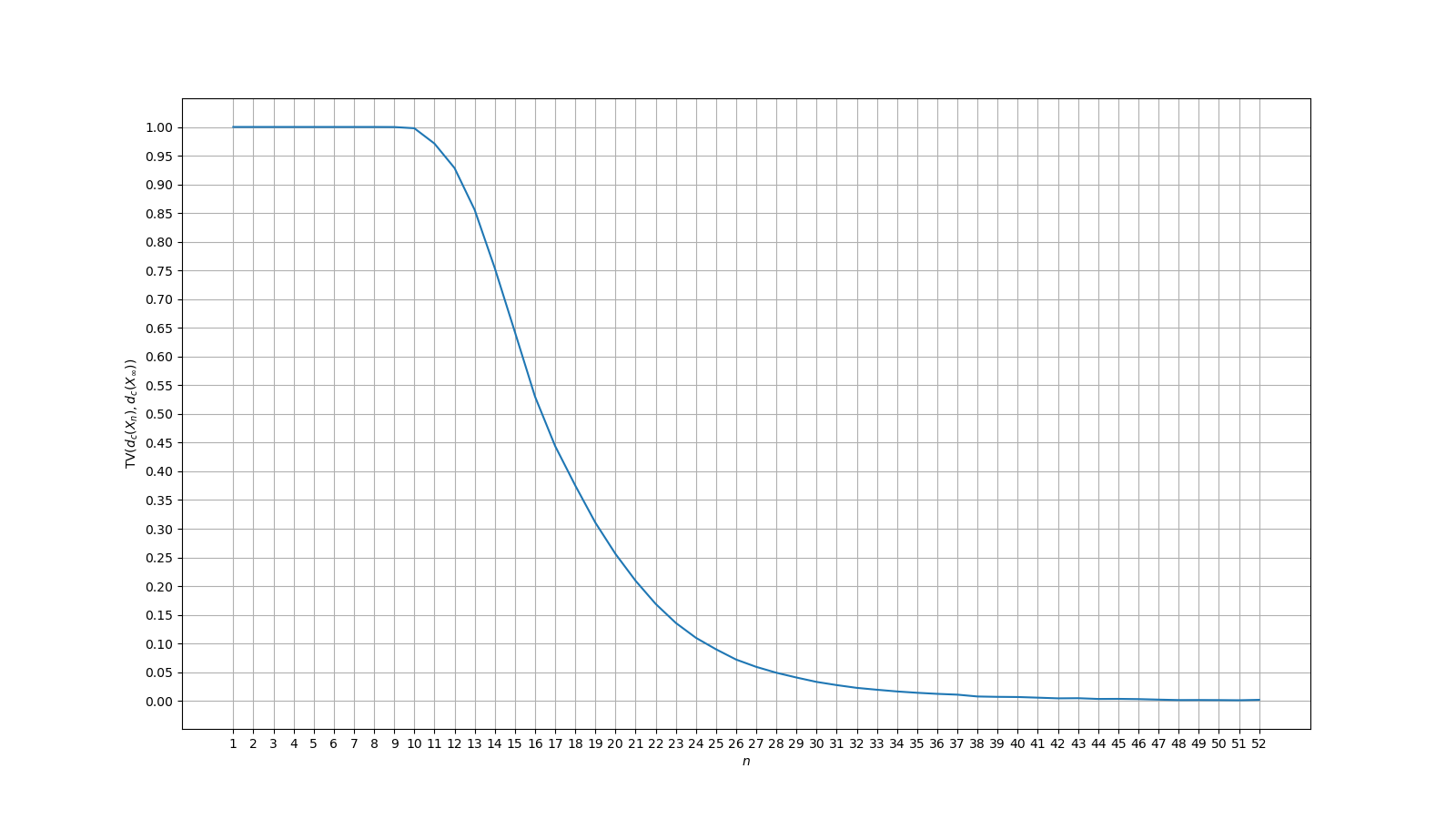}}
\caption{The decay curve of $\mathrm{TV}(d_c(X_n),d_c(X_\infty))$.}
\label{decay_c}
\end{center}
\end{figure*}

The checkerboard clearly requires 6 moves to solve (e.g., U2D2F2B2L2R2), so $d_c(X)$ starts from $6$. Similar to $d_o(X)$, the convergence of $d_c(X)$ is all about moving towards 17 and 18, from left to right. Since it starts closer to 17 and 18, its convergence is faster, as shown in Figure \ref{decay_c} and \ref{drift_c}.

\begin{figure}[ht]
    \centering
    \begin{minipage}{0.25\textwidth}
        \centering
        \includegraphics[width=1\linewidth]{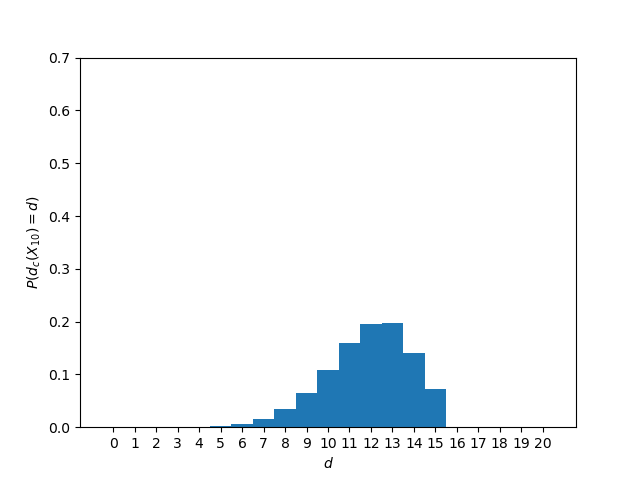}
    \end{minipage}%
    \begin{minipage}{0.25\textwidth}
        \centering
        \includegraphics[width=1\linewidth]{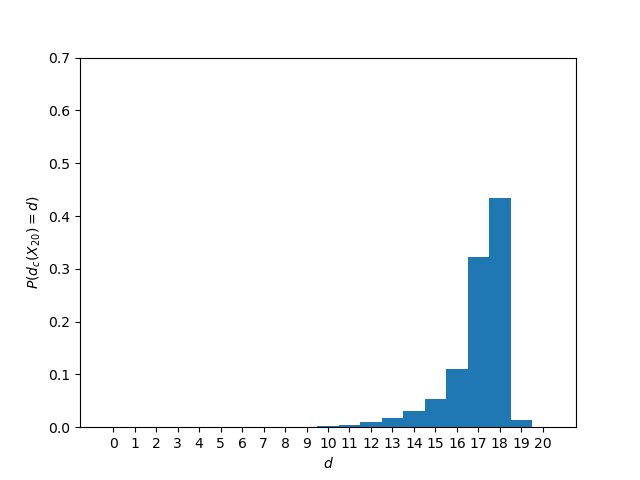}
    \end{minipage}%
    \begin{minipage}{0.25\textwidth}
        \centering
        \includegraphics[width=1\linewidth]{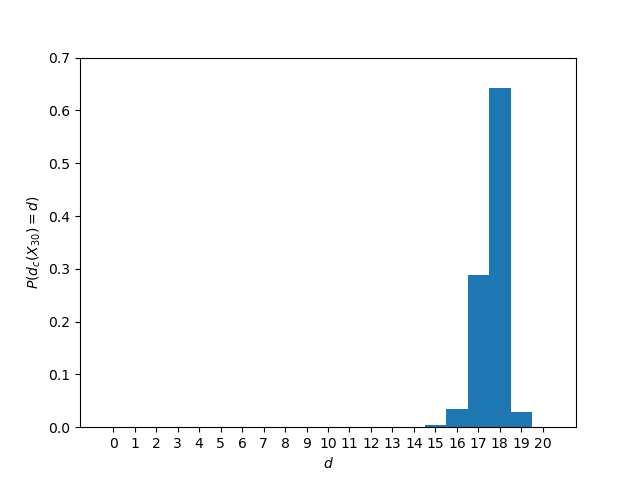}
    \end{minipage}%
    \begin{minipage}{0.25\textwidth}
        \centering
        \includegraphics[width=1\linewidth]{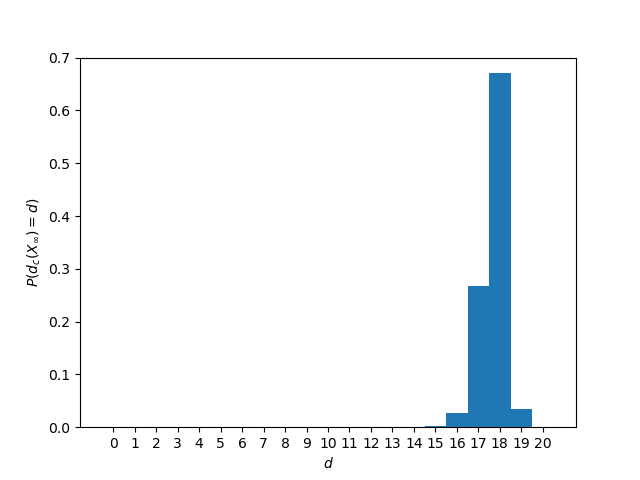}
    \end{minipage}
    \caption{The distributions of $d_c(X_{10})$, $d_c(X_{20})$, $d_c(X_{30})$, and $d_c(X_\infty)$.}
    \label{drift_c}
\end{figure}

\begin{ack}
    The third author thanks Lilly Endowment Inc. for its support through the Indiana University Pervasive Technology Institute.
\end{ack}

\bibliographystyle{plainnat}
\bibliography{references}
\end{document}